\documentclass[12pt]{article}
\usepackage{amsmath}
\usepackage{amsfonts}
\usepackage{amssymb}
\usepackage{graphicx}
\usepackage{amsthm}
\usepackage{enumerate}
\usepackage{url,xargs}
\usepackage{hyperref}

\begin{document}

\title{Flows and bisections in cubic graphs}

\author{L. Esperet \footnote{Laboratoire G-SCOP (Univ. Grenoble Alpes, CNRS),
    Grenoble, France. Partially supported by ANR Project STINT
  (\textsc{anr-13-bs02-0007}) and LabEx PERSYVAL-Lab
  (\textsc{anr-11-labx-0025}).} \and
  G. Mazzuoccolo \footnote{Department of Computer Science, University of Verona, Italy. E-mail: {\tt giuseppe.mazzuoccolo@univr.it}}
    \and M. Tarsi \footnote{The Blavatnik School of Computer Science,
    Tel Aviv University, Israel. E-mail: {\tt tarsi@post.tau.ac.il}}}

\maketitle

\newtheorem{theorem}{Theorem}
\newtheorem{lemma}[theorem]{Lemma}
\newtheorem{claim}[theorem]{Claim}
\newtheorem{definition}[theorem]{Definition}
\newtheorem{example}[theorem]{Example}
\newtheorem{corollary}[theorem]{Corollary}
\newtheorem{conjecture}[theorem]{Conjecture}
\newtheorem{observation}[theorem]{Observation}
\newtheorem{proposition}[theorem]{Proposition}

\newcommand{\Po}{\cal{P}^*}
\newcommand{\Pa}{\cal{P}}
\newcommand{\Qo}{\cal{Q}^*}
\newcommand{\Qa}{\cal{Q}}
\def\scc{\mathop{\mathrm{scc}}\nolimits}

\begin{abstract} \noindent
A \emph{$k$-weak bisection} of a cubic graph $G$ is a partition of the
vertex-set of $G$ into two parts
$V_1$ and $V_2$ of equal size, such that each connected component of
the subgraph of $G$ induced by $V_i$ ($i=1,2$) is a tree of at most
$k-2$ vertices. This notion can be viewed as a relaxed version of
nowhere-zero flows, as it directly follows from old results of Jaeger that
every cubic graph $G$ with a circular nowhere-zero $r$-flow has a
\emph{$\lfloor r \rfloor$-weak bisection}. In this paper we study problems related to the existence of $k$-weak
bisections. We believe that every cubic graph which has a
perfect matching, other than the Petersen graph, admits a 4-weak
bisection and we present a family of cubic graphs with no perfect
matching which do not admit such a bisection. The main result of
this article is that every cubic graph admits a 5-weak bisection.
When restricted to bridgeless graphs, that result would be a
consequence of the assertion of the 5-flow Conjecture
and as such it can be considered a (very small) step toward
proving that assertion. However, the harder part of our proof
focuses on graphs which do contain bridges.
\end{abstract}

%% \noindent \textit{Keywords: cubic graphs, circular flow number, internal partitions.\\
%%  MSC(2010):}

\section{Strong and weak bisections and how they relate to nowhere-zero
flows}\label{sec:flo}

This introductory section is mostly a brief reformulation and
adaptation of some results stated and proved in \cite{bvj} in
terms of integer nowhere-zero flows and then in \cite{steff} in
the more general setting of real valued flows.

\smallskip

\begin{definition}
 Given a real number $r\geq 2$, a {\bf circular nowhere-zero
$r$-flow} ($r$-{\sc cnzf} for short) in a graph $G=(V,E)$ is an
assignment $f:E \rightarrow [1,r-1]$ and an orientation $D$ of
$G$, such that $f$ is a {\bf flow} in $D$. That is, for every
vertex $x\in V$, $\sum_{e\in E^+(x)}f(e)=\sum_{e\in E^-(x)}f(e)$,
where $E^+(x)$, respectively $E^-(x)$, are the sets of edges
directed from, respectively towards $x$ in $D$.
\end{definition}
Accordingly defined is:
\begin{definition} The \textbf{circular flow number} $\phi_c(G)$ of a graph $G$ is the
infimum of the set of real numbers $r$, for which $G$ admits an
$r$-{\sc cnzf}. If $G$ has a bridge then we define
$\phi_c(G)=\infty$.
\end{definition}

The notion of $r$-{\sc cnzf} was first introduced in \cite{gtz},
while observing that $(k,d)$-coloring, previously studied by Bondy
and Hell \cite{BH}, can be interpreted as the dual of a real
(rather than integer)-valued nowhere-zero flow. The existence of
$\phi_c(G)$ and the fact that it is always rational, was also
observed and stated in  \cite{gtz}. \textbf{Integer} nowhere-zero
flows are much more widely known and intensively studied, since
first presented by W. Tutte \cite{5flow}, 60 years ago.

\bigskip

Along the article we use the following notational conventions:
 Let $G=(V,E)$ be a graph.

\begin{itemize}

 \item
 Given a set of vertices $X \subseteq V$, the set of edges with one
 endvertex in $X$ and the other one in $V \setminus X$, known as the
 \textbf{edge-cut} induced by $X$, is denoted here by $E(X)$.

 \item A \textbf{bridge} is an edge-cut which consists of a single edge. A
 graph with no bridge is \textbf{bridgeless}.

 \item When $D$ is an orientation of $G$, $E(X)$ is partitioned
 into $E^+(X)$ consisting of the edges directed from
 $X$ to $V\setminus X$, and $E^-(X)$, the set of edges directed
 from $V\setminus X$ to $X$.

\end{itemize}

\begin{definition}
A {\bf bisection} of a cubic graph $G=(V,E)$ is a partition of its
vertex set $V$ into two disjoint subsets $V_1$ and $V_2$ of the
same cardinality $|V_1|=|V_2|$.
\end{definition}

For reasons clarified in the sequel we also define:

\begin{definition}
Let $k\ge 3$ be an integer. A bisection $(V_1,V_2)$ of a cubic
graph $G=(V,E)$ is a \textbf{$k$-weak bisection} if every
connected component of each of the two subgraphs of $G$, induced by
$V_1$ and by $V_2$  is a tree on at most $k-2$ vertices. Such a
component is referred to in the sequel as \textbf{monochromatic}.
\end{definition}

\medskip

Let $D$ be an orientation of a graph $G=(V,E)$ and let $f$ be an
$r$-{\sc cnzf} in $D$. Then for any $x \in V$, both $E^+(\{x\})$,
and $E^-(\{x\})$ are non-empty. If $G$ is cubic then $V$ is
partitioned into two sets $V_1,V_2$, where $V_i$ consists of all
vertices of out-degree $i$ in $D$. That partition is clearly a
bisection.

Furthermore, assume that there is a monochromatic path $xyz$ in
$V_1$. Counting the relevant ingoing and outgoing edges reveals
$E^+(\{x,y,z\})=1$ and $E^-(\{x,y,z\})=4$, and then it follows
from the definition of an $r$-{\sc cnzf}  that $r\geq (4/1)+1=5$.
If $V_1$ contains a monochromatic cycle, then all the edges with
exactly one endpoint in this cycle are directed toward the cycle
in  $D$, which contradicts the fact that $D$ has an $r$-{\sc cnzf}
(regardless of the value of $r$). Clearly $V_1$ can be replaced
here by $V_2$.
 Hence, we have shown that any cubic graph $G$ with $\phi_c(G)<5$ has a 4-weak bisection.
The same degree counting arguments leads to the following general
observation, explicitly stated and proved (for integer values of
$r$) in \cite{bvj} :

\begin{proposition}
\label{local} If a cubic graph $G$ admits an $r$-{\sc cnzf} then
there exists a $\lfloor r\rfloor$-weak bisection of $G$ ($\lfloor
r\rfloor$ is the largest integer, smaller or equal to $r$).
\end{proposition}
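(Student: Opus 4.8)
The plan is to read the required bisection directly off the orientation. Let $D$ be an orientation of $G$ and $f$ an $r$-{\sc cnzf} in $D$. Since $f(e)\ge 1>0$ for every edge, flow conservation at a single vertex $x$ forces both $E^+(\{x\})$ and $E^-(\{x\})$ to be nonempty, so in a cubic graph every vertex has out-degree $1$ or $2$ in $D$. Set $V_i=\{x\in V: |E^+(\{x\})|=i\}$ for $i=1,2$. I would first check that $(V_1,V_2)$ is a genuine bisection: summing out-degrees over all vertices gives $|V_1|+2|V_2|=|E|=\tfrac32|V|$, and since $|V|=|V_1|+|V_2|$ this simplifies to $|V_1|=|V_2|$.

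Next I would bound a monochromatic component $H$, say with $X=V(H)\subseteq V_1$. Note that $G[X]=H$, because $H$ is a connected component of $G[V_1]$, so ``edges inside $X$'' are precisely the edges of $H$. Double counting directed edges with tail in $X$ yields $\sum_{x\in X}|E^+(\{x\})|=|E(H)|+|E^+(X)|$, and since every vertex of $X$ has out-degree $1$ this reads $|X|=|E(H)|+|E^+(X)|$; counting all degrees gives $3|X|=2|E(H)|+|E^+(X)|+|E^-(X)|$. The key step is to combine these with the cut version of flow conservation, $\sum_{e\in E^+(X)}f(e)=\sum_{e\in E^-(X)}f(e)$, which follows by summing conservation over the vertices of $X$. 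If $H$ contained a cycle then $|E(H)|\ge |X|$, forcing $|E^+(X)|=0$ while $|E^-(X)|=3|X|-2|E(H)|=|X|\ge 1$; then the left-hand side of the conservation identity is $0$ and the right-hand side is at least $|X|\ge 1$, a contradiction. Hence $H$ is a tree, so $|E(H)|=|X|-1$, giving $|E^+(X)|=1$ and $|E^-(X)|=|X|+1$; conservation then forces $r-1\ge \sum_{e\in E^+(X)}f(e)=\sum_{e\in E^-(X)}f(e)\ge |X|+1$, so $|X|\le r-2$, and since $|X|$ is an integer, $|X|\le \lfloor r\rfloor-2$.

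To handle components inside $V_2$ I would simply observe that $f$ is also a flow in the reversed orientation $-D$ (the conservation equations are unchanged), that it is still an $r$-{\sc cnzf}, and that out-degrees in $-D$ are $3$ minus out-degrees in $D$, so the class $V_2$ with respect to $D$ is exactly the class $V_1$ with respect to $-D$; the previous paragraph then applies verbatim. I do not expect a serious obstacle in this proof — it is a short degree-counting argument, essentially the one already sketched in the text before Proposition~\ref{local} — and the only points needing care are the two just mentioned: the reduction between the two colour classes via orientation reversal, and keeping the edge bookkeeping precise (in particular the identity $G[X]=H$ so that the counts of internal edges, $E^+(X)$ and $E^-(X)$ are consistent).
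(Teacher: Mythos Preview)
Your argument is correct and follows exactly the approach the paper sketches in the paragraph preceding Proposition~\ref{local}: partition $V$ by out-degree in the orienting digraph, then use the edge-cut counting together with the flow constraint $1\le f(e)\le r-1$ to bound the size of a monochromatic tree and to rule out monochromatic cycles. The only differences are that you make the bookkeeping explicit (the identities $|X|=|E(H)|+|E^+(X)|$ and $3|X|=2|E(H)|+|E^+(X)|+|E^-(X)|$) and handle $V_2$ by orientation reversal rather than by symmetry, neither of which constitutes a different method.
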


Also stated and proved in \cite{bvj} and \cite{steff} is the
following stronger theorem, which fully characterizes cubic graphs
(an instance of a more general statement which refers to all
graphs) which admit $r$-{\sc cnzf}s, in terms of bisections:

\begin{theorem}
\label{rb} Let $r \geq 3$ be a real number. A cubic graph
$G=(V,E)$ admits an $r$-{\sc cnzf} if and only if there exists a
bisection $(V_1,V_2)$ of $G$, such that for every set of vertices
$X\subseteq V$, \[|E(X)|\geq \tfrac r{r-2}\,||V_1 \cap X| -|V_2
\cap X|| \]
\end{theorem}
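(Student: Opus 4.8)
The plan is to prove the two implications separately. The forward implication is a pure degree count; the backward one reduces to two classical facts about digraphs, namely the feasibility criterion for orientations with prescribed out-degrees and Hoffman's circulation theorem.

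For the forward direction, suppose $G$ has an $r$-{\sc cnzf} $(f,D)$. Since $f(e)\ge 1>0$ on every edge and $f$ is a flow, no vertex of $D$ has out-degree $0$ or $3$, so $V$ partitions into the set $V_1$ of out-degree-$1$ vertices and the set $V_2$ of out-degree-$2$ vertices; counting arcs gives $|V_1|+2|V_2|=|E|=\tfrac32|V|$, hence $|V_1|=|V_2|$ and $(V_1,V_2)$ is a bisection. Fix $X\subseteq V$ and put $a=|E^+(X)|$ and $b=|E^-(X)|$, so $|E(X)|=a+b$. Summing $\deg^+(v)-\deg^-(v)$ over $v\in X$ (the edges inside $G[X]$ contribute $0$) gives $a-b=|V_2\cap X|-|V_1\cap X|$. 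Since the net flow across $E(X)$ is $0$, we have $\sum_{e\in E^+(X)}f(e)=\sum_{e\in E^-(X)}f(e)$, and as each $f(e)$ lies in $[1,r-1]$ this common value is at least $\max(a,b)$ and at most $(r-1)\min(a,b)$; hence $\max(a,b)\le(r-1)\min(a,b)$. A one-line manipulation turns this into $a+b\ge\tfrac{r}{r-2}\,|a-b|$, which is exactly the claimed bound for $X$.

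For the backward direction, assume $(V_1,V_2)$ is a bisection for which $|E(X)|\ge\tfrac{r}{r-2}\,\bigl||V_1\cap X|-|V_2\cap X|\bigr|$ holds for every $X$. First I would use this to build an orientation $D_0$ of $G$ in which every vertex of $V_1$ has out-degree $1$ and every vertex of $V_2$ has out-degree $2$. By the standard degree-constrained orientation theorem (a consequence of Hall's theorem, or of max-flow--min-cut), such an orientation exists precisely when $|E(G[X])|\le|V_1\cap X|+2|V_2\cap X|$ for all $X$; rewriting $|E(G[X])|=\tfrac12\bigl(3|X|-|E(X)|\bigr)$ and using $|V_1|=|V_2|$, this condition becomes $|E(X)|\ge\bigl||V_1\cap X|-|V_2\cap X|\bigr|$ for all $X$, which is implied by the hypothesis because $\tfrac{r}{r-2}\ge1$. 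With $D_0$ fixed, it suffices to find a circulation $f$ in $D_0$ with $1\le f(e)\le r-1$ for every $e$: such an $f$, together with $D_0$, is an $r$-{\sc cnzf} of $G$ inducing exactly the bisection $(V_1,V_2)$. By Hoffman's circulation theorem (valid over the reals) this circulation exists if and only if $|E^-_{D_0}(Y)|\le(r-1)\,|E^+_{D_0}(Y)|$ for every $Y\subseteq V$. Writing again $a=|E^+_{D_0}(Y)|$ and $b=|E^-_{D_0}(Y)|$, the same degree count as above gives $a-b=|V_2\cap Y|-|V_1\cap Y|$ and $a+b=|E(Y)|$; substituting the hypothesis $a+b\ge\tfrac{r}{r-2}|a-b|$ and splitting into the cases $a\ge b$ and $a<b$ yields $b\le(r-1)a$ in both cases, which finishes the argument.

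The proof is not conceptually hard, and the real work is bookkeeping: one must translate the single scalar inequality in the statement into the cut condition guaranteeing the orientation $D_0$ on the one hand, and into the cut condition of Hoffman's theorem on the other, and check that the constant $\tfrac{r}{r-2}$ is exactly what makes both translations succeed --- in particular the equivalence of $\max(a,b)\le(r-1)\min(a,b)$ with $a+b\ge\tfrac{r}{r-2}|a-b|$, and the two-case analysis at the end. I expect this matching of constants, rather than any individual step, to be where care is needed; the argument itself uses nothing beyond $r>2$.
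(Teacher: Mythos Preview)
Your proof is correct and follows exactly the line the paper sketches: the ``only if'' direction is the degree/cut counting argument the paper alludes to, and the ``if'' direction is precisely the ``basic flow network theory'' the paper invokes without details (an orientation lemma to realize the prescribed out-degrees, then Hoffman's circulation theorem with bounds $1$ and $r-1$). The paper itself does not give a self-contained proof but refers to \cite{bvj} and \cite{steff}; your write-up supplies those details faithfully.
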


The "only if" part of Theorem~\ref{rb} is proved by applying the
same counting argument as above to all subsets $X$ of $V$, while
the proof of Proposition~\ref{local} only considers the subsets
$X$ of $V_1$ and $V_2$. The "if" part also requires some basic
flow network theory.

We refer to a bisection which satisfies the condition of Theorem
\ref{rb} as an \textbf{$r$-strong bisection} of $G$. Note that an
$r$-strong bisection is clearly also a $\lfloor r\rfloor$-weak
bisection.

By definition, a cubic graph $G$ admits a $3$-weak bisection if
and only if it is bipartite, in which case that bisection
$(V_1,V_2)$ consists of the two "sides" of the bipartition of $V$.
It is easy to observe that $(V_1,V_2)$ in that case is also a
$3$-strong bisection. Proposition \ref{local} hence implies that
the circular flow number $\phi_c(G)$ of a cubic graph $G$ is never
strictly between 3 and 4.

\section{On 4-weak bisections} \label{contbl}

It had been suggested by Amir Ban and Nati Linial \cite{BanLin}
that every cubic graph other than the Petersen graph, admits a
4-weak bisection (where every monochromatic component is either an
isolated vertex or a single edge). However, we later found an
infinite family of counterexamples to that statement. These
examples, hereby presented, are also quoted in \cite{BanLin2}.

 A frequently used module in our construction is the
 graph $L_k$ ($k\ge 0$) depicted
in Figure~\ref{fig:Lk}. Note that $L_0$ is obtained from $K_{3,3}$
by the removal of one edge.

\begin{figure}[htb]
\centering \includegraphics[scale=0.8]{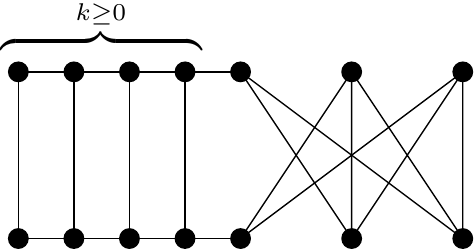} \caption{ $L_k$}
\label{fig:Lk}
\end{figure}

We say that a 2-coloring of the vertices of a graph (not
necessarily cubic) is \textbf{balanced} if the two color classes
are of the same size.

\begin{lemma}\label{lem:Lk}
For any $k\ge 0$, any $2$-coloring of $L_k$ with no monochromatic
component on more than 2 vertices is balanced and the two vertices
of degree 2 are of distinct colors.
\end{lemma}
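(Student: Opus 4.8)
The plan is to argue by induction on $k$, the base case $k=0$ being a direct analysis of $K_{3,3}$ minus an edge.

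For $k=0$, write $A=\{a_1,a_2,a_3\}$ and $B=\{b_1,b_2,b_3\}$ for the two sides of $K_{3,3}$ and let $a_1b_1$ be the deleted edge, so that $a_1$ and $b_1$ are the two vertices of degree $2$ in $L_0$. A $2$-colouring with no monochromatic component on more than $2$ vertices is exactly one in which every monochromatic component is a single vertex or a single edge; in particular there is no monochromatic path on three vertices and no monochromatic cycle. First I would rule out monochromatic edges altogether: if $a_ib_j$ were monochromatic, say of colour $1$, then every remaining neighbour of $a_i$ and of $b_j$ would be forced to colour $2$, and in $K_{3,3}-a_1b_1$ these neighbourhoods always contain two adjacent vertices having a further common neighbour among them, producing a monochromatic $P_3$ --- a contradiction. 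Hence the colouring is proper, so it is the bipartition colouring of $L_0$ (up to swapping the two colours); this colouring is balanced and gives $a_1$ and $b_1$ distinct colours.

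For the inductive step I would describe $L_k$ as obtained from $L_{k-1}$ by attaching the extra block drawn in Figure~\ref{fig:Lk}, in such a way that a copy of $L_{k-1}$ sits inside $L_k$ containing one of the two degree-$2$ vertices of $L_k$, while the other degree-$2$ vertex of $L_k$ lies in the new block. Given a $2$-colouring of $L_k$ with no monochromatic component on more than $2$ vertices, look at the colouring it induces on this copy of $L_{k-1}$: since passing to a subgraph can only break monochromatic components into smaller ones, that induced colouring still has all monochromatic components of order at most $2$, so by the induction hypothesis it is balanced and assigns distinct colours to the two degree-$2$ vertices of $L_{k-1}$. It then remains to analyse the new block together with the colours it already carries on its interface with $L_{k-1}$: using again that no monochromatic $P_3$ and no monochromatic cycle can occur, I would show that those interface colours rigidly determine the colouring of the block up to the relevant symmetry, and that this forces both (i) the block to contain equally many vertices of each colour, so that $L_k$ stays balanced, and (ii) the new degree-$2$ vertex to receive the colour opposite to that of the degree-$2$ vertex lying in $L_{k-1}$.

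The delicate point is step (ii): knowing that the block's colouring is balanced is not enough, one has to trace how a fixed colour on the interface propagates through the block to its far terminal and check that every admissible completion yields the opposite colour there. As in the base case this rests on the fact that a monochromatic edge in a dense enough part of the block cannot survive without creating a monochromatic component on three vertices; turning this into a short, uniform case analysis over all possible interface colourings, rather than a long ad hoc verification, is the main obstacle.
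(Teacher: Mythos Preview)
Your base case is correct and is essentially the paper's argument: any monochromatic edge in $L_0$ forces a monochromatic $P_3$, so the colouring must be proper and hence is the bipartition.

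Your inductive step, however, rests on a structural picture of $L_k$ that does not match the actual construction, and this is what makes your ``delicate point'' appear. In the paper, $L_{k-1}$ is obtained from $L_k$ by deleting \emph{both} degree-$2$ vertices $u$ and $v$ of $L_k$; neither of them lies in the copy of $L_{k-1}$. Concretely, $u$ and $v$ are adjacent to each other, and each has exactly one further neighbour; these two further neighbours $u',v'$ are precisely the degree-$2$ vertices of the resulting $L_{k-1}$. Your proposed decomposition, with one degree-$2$ vertex of $L_k$ inside $L_{k-1}$, does not in fact exist: since $u$ and $v$ are adjacent and both have degree $2$ in $L_k$, they would have to be the two degree-$2$ vertices of any induced copy of $L_{k-1}$ containing one of them, but in every $L_j$ the two degree-$2$ vertices are non-adjacent.

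With the correct decomposition the obstacle you flag disappears entirely. By induction the restriction to $L_{k-1}$ is balanced and $u',v'$ receive distinct colours. The neighbours of $u$ are $\{v,u'\}$ and those of $v$ are $\{u,v'\}$. If $u$ and $v$ had the same colour, then, since exactly one of $u',v'$ also has that colour, one of $u,v$ would share its colour with both of its neighbours, producing a monochromatic $P_3$. Hence $u$ and $v$ have distinct colours, and adding them to the balanced colouring of $L_{k-1}$ keeps it balanced. No block analysis or interface propagation is needed.
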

\begin{proof}
We prove the result by induction on $k$. The two vertices of
degree 2 are denoted by $u$ and $v$. When referring to the color
of a vertex we relate to any given $2$-coloring of $L_k$ with no
monochromatic component of size 3 or more.

Assume first that $k=0$. If two adjacent vertices have the same
color, say 1, then all four remaining vertices have to be colored
2 and some vertex colored 2 has two neighbors colored 2 as well,
which is a contradiction. It follows that without loss of
generality the two neighbors of $u$ (resp. $v$) are colored 1
(resp. 2). As a consequence, $u$ and $v$ are colored 1 and 2,
respectively and the coloring is indeed balanced.

If $k\ge 1$, then by removing $u$ and $v$ we obtain $L_{k-1}$. By
the induction hypothesis, the coloring of the copy of $L_{k-1}$ is
balanced and the two vertices of degree 2 of the copy of $L_{k-1}$
have distinct colors. It follows that if $u$ and $v$ have the same
color, then one of them has the same color as its two neighbors,
which is a contradiction. Therefore, $u$ and $v$ have distinct
colors, and the coloring is balanced.
\end{proof}

For any $k\ge 0$, let $L'_k$ be the graph obtained from $L_k$ by
adding a vertex adjacent to the two vertices of degree 2 of $L_k$,
and let $T_k$ be the graph obtained by taking one copy of $L'_k$
and two copies of $L'_0$ and adding a new vertex adjacent to the
three vertices of degree 2 (see Figure~\ref{fig:counterexample}).

\begin{proposition}\label{p:counter}
For any $k\ge 0$, the graph $T_k$ has no 4-weak bisection.
\end{proposition}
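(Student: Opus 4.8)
The plan is to argue by contradiction: suppose $(V_1,V_2)$ is a $4$-weak bisection of $T_k$ and view it as a $2$-coloring of $V(T_k)$ in which every monochromatic component has at most two vertices (for $k=4$ the tree condition is automatic on components of size at most two). Fix notation: let $z$ be the apex vertex joining the three blocks, let $w$, $w_1$, $w_2$ be the degree-$2$ vertices of the copy of $L'_k$ and of the two copies of $L'_0$ respectively, and let $u_j,v_j$ be the two degree-$2$ vertices of the copy of $L_j$ sitting inside the block $L'_j$, so that $w_j$ is adjacent to $u_j$ and $v_j$ while $z$ is adjacent to $w$, $w_1$ and $w_2$.

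The first step is to note that a monochromatic component of any induced subgraph of $T_k$ is contained in a monochromatic component of $T_k$, hence also has at most two vertices. Applying this to each copy of $L_j$ and invoking Lemma~\ref{lem:Lk}, the coloring restricted to $L_j$ is balanced and gives $u_j$ and $v_j$ distinct colors. Adding the vertex $w_j$ back shows that $L'_j$ has an odd number of vertices and that the color occurring one more time than the other in $L'_j$ --- call it the \emph{majority color} of the block --- is exactly the color of $w_j$.

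The key step is to show that $z$ receives a color different from each of $w$, $w_1$, $w_2$. Indeed, if $z$ and some $w_j$ had the same color, say color $1$, then the monochromatic component of $z$ would already contain $\{z,w_j\}$; if one of the other two vertices among $w,w_1,w_2$ were also colored $1$ this component would have at least three vertices, so those two are colored $2$; but $u_j$ and $v_j$ have distinct colors, so one of them, say $u_j$, is colored $1$, and then $\{z,w_j,u_j\}$ lies in a single monochromatic component --- a contradiction. Hence $w$, $w_1$, $w_2$ all receive the color different from that of $z$; in particular they share a common color, which by the previous paragraph is the majority color of every one of the three blocks.

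It remains to count. Since $V(T_k)$ is the disjoint union of the vertex sets of the three blocks and $\{z\}$, the common majority color gains one surplus vertex in each block, for a surplus of $3$, while $z$ contributes a single vertex to the opposite color class; thus the two color classes differ in size by $3-1=2\neq 0$, contradicting $|V_1|=|V_2|$. The main obstacle is the key step: the bare arithmetic --- three blocks each unbalanced by $\pm 1$ together with the lone vertex $z$ --- is perfectly consistent with a balanced coloring, so one really must exploit the bound on component sizes at $z$ to force the three degree-$2$ vertices $w,w_1,w_2$ into the same color class.
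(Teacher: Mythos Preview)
Your proof is correct and follows essentially the same approach as the paper: apply Lemma~\ref{lem:Lk} to each $L_j$ to get balance and distinct colors on $u_j,v_j$, deduce that the apex $z$ must differ in color from each $w_j$ (since $w_j$ already has a same-colored neighbor among $u_j,v_j$), and then count to obtain a discrepancy of $2$. The case split on the colors of the other $w$-vertices in your key step is unnecessary --- the observation that one of $u_j,v_j$ shares $w_j$'s color already yields the three-vertex monochromatic path $z\,w_j\,u_j$ directly --- but this does not affect correctness.
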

\begin{proof}
Let $x$ be the vertex whose neighbors are the vertices of degree
two in the copies of $L'_k$ and $L'_0$. Consider an arbitrary
4-weak bisection of $T_k$. Note that by Lemma~\ref{lem:Lk}, the
corresponding 2-colorings of the copies of $L_k$ and $L_0$ are
balanced, and the color of $x$ has to be distinct from the colors
of its three neighbors. Therefore, the sizes of the two color
classes differ by 2.
\end{proof}

The two copies of $L_0$ can clearly be replaced by one copy of
$L_i$ and one of  $L_j$, for any  $i,j\ge 0$.

\bigskip

\begin{figure}[htb]
\centering
\includegraphics[width=7cm]{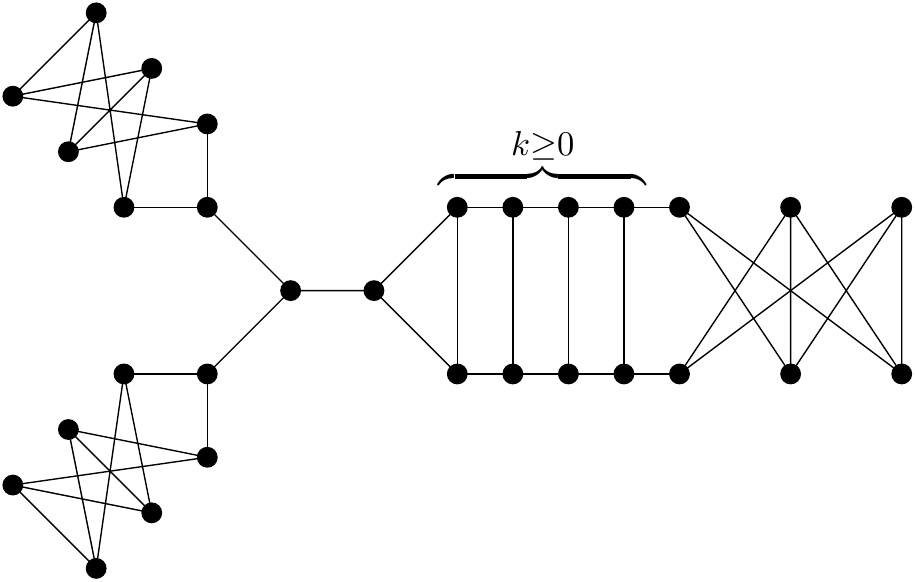}
\caption{$T_k$, an infinite family of cubic graph with no 4-weak
bisections}  \label{fig:counterexample}
\end{figure}

It so happens that each of our counterexamples contains a
\textbf{bridge}. Furthermore, all these examples lack a perfect
matching, and, as far as we can tell, that may be true for every
counterexample. We therefore rephrase Ban and Linial's idea as
follows:
\bigskip
\begin{conjecture}\label{conj:mod-banlin}
The Petersen graph is the only cubic graph which
admits a perfect
  matching, but no $4$-weak bisection.
\end{conjecture}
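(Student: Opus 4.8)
The plan is to argue by induction on $|V(G)|$, after first recasting a $4$-weak bisection in a more workable form. Since a connected graph which is a tree on at most two vertices is either a single vertex or a single edge, a $4$-weak bisection of a cubic graph $G$ is exactly a \emph{balanced} $2$-colouring of $V(G)$ in which every colour class induces a subgraph of maximum degree at most $1$; equivalently, it is a matching $N\subseteq E(G)$ together with a balanced proper $2$-colouring of $G-N$ in which every edge of $N$ is monochromatic. If one drops the balance requirement, such a colouring always exists: a cubic graph has maximum degree $3$, so by the classical result of Lov\'asz on partitioning a vertex set according to maximum degree, $V(G)$ splits into two parts each inducing maximum degree at most $1$. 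Hence the whole content of the conjecture lies in the word \emph{balanced}, and the problem is really one of \emph{equitable defective colouring}.

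I would first dispose of the ``flow-friendly'' case. If $G$ is bridgeless and $3$-edge-colourable then $G$ has a nowhere-zero $4$-flow, so $\phi_c(G)\le 4<5$ and Proposition~\ref{local} already produces a $4$-weak bisection; more generally Proposition~\ref{local} settles every $G$ with $\phi_c(G)<5$, which is strictly more than the $3$-edge-colourable graphs. What then remains are two classes: (i) cubic graphs containing a bridge that nonetheless have a perfect matching, and (ii) snarks other than the Petersen graph (and, after the reductions below, the cyclically highly-connected non-$3$-edge-colourable pieces they reduce to).

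For class (i) I would run the induction through small edge-cuts. The perfect matching $M$ pins these down by parity: if $e$ is a bridge then $e\in M$ and the two sides of $e$ have odd order, and if $\{e,f\}$ is a $2$-edge-cut then $M$ contains either both or neither of $e,f$ and the two sides have even order. Splitting $G$ at such a cut, completing each side to a smaller cubic graph with a perfect matching in the standard way, and recursing, one reduces to the cyclically $4$-edge-connected case. The bookkeeping to watch is that each side supplies a local maximum-degree-$\le 1$ colouring with a prescribed, possibly nonzero, imbalance, and one must be able to choose colourings of the two sides whose imbalances cancel; the parity facts above are exactly what provides the needed slack.

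The heart of the matter, and the step I expect to be the real obstacle, is class (ii): a cyclically highly-connected cubic graph $G\ne$ Petersen that is not $3$-edge-colourable, where no flow shortcut is available. Here I would start from an arbitrary maximum-degree-$\le 1$ colouring and try to drive its imbalance down to $0$ by local recolouring. Single-vertex moves are useless, since in such a colouring of a cubic graph every vertex already has at least two neighbours of the opposite colour; one must instead transfer a vertex along an alternating chain — flip a bichromatic edge of the perfect matching and then repair the newly created degree-$2$ conflicts down a path, in the spirit of Kempe chains and augmenting paths for equitable colourings. Either every unbalanced colouring admits such an improvement (and we are done), or the process gets stuck at a colouring whose local structure is so rigid that one can argue $G$ must be the Petersen graph, or must fail to have a perfect matching. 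Showing that the \emph{only} unavoidable obstruction is precisely the Petersen graph — rather than a short sporadic list — is the delicate point; this is presumably why the statement appears here only as a conjecture, and a complete resolution would likely need a genuinely new structural input about snarks on top of the reductions above.
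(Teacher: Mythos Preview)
The statement you are addressing is Conjecture~\ref{conj:mod-banlin}, and the paper does \emph{not} prove it; the authors say explicitly that the conjecture ``seems, at least for now, far beyond our reach.'' There is therefore no proof in the paper to compare your proposal against.

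Your proposal is not a proof either, as you yourself concede in the final paragraph. As a programme the reductions are reasonable: your reformulation of a $4$-weak bisection as a balanced $2$-colouring with monochromatic maximum degree at most~$1$ is correct, and invoking Proposition~\ref{local} to dispose of all $G$ with $\phi_c(G)<5$ matches exactly what the paper says. But two genuine gaps remain. First, the induction through small edge-cuts in class~(i) is only sketched: when you split at a bridge or a $2$-edge-cut and complete each side to a smaller cubic graph with a perfect matching, you must rule out that a side becomes the Petersen graph, and you must actually \emph{prove} that the two sides admit colourings with prescribed opposite imbalances---parity alone does not give this. Second, and decisively, for class~(ii) (cyclically $4$-edge-connected non-$3$-edge-colourable graphs with $\phi_c\ge 5$) you offer only a heuristic alternating-chain recolouring and admit you cannot show that the Petersen graph is the unique obstruction. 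The paper identifies precisely this class as the hard case and reports only ad hoc checks on a few examples; your outline does not go beyond what the paper already declares open.
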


A slightly weaker version of Conjecture \ref{conj:mod-banlin},
referring to {\bf bridgeless} cubic graphs (which do admit perfect
matchings \cite{peter}) is included in \cite{BanLin2}.

As a result of Proposition \ref{local}, every cubic graph $G$ with
$4\leq \phi_c(G)< 5$ admits a $4$-weak bisection. Accordingly, a
counterexample for Conjecture \ref{conj:mod-banlin} should be
looked for among cubic graphs $G$ with  $\phi_c(G)\geq 5$. When
restricted to bridgeless graphs, there are several known simple
schemas to recursively construct infinitely many cubic graphs, 2-
and 3-connected, with circular flow number 5, from smaller ones.
We checked several such graphs, obtained from the Petersen graph, and
found them to admit $4$-weak bisections, but we can offer nothing
in the direction of a general proof. We recently \cite{emt}
developed an arsenal of construction methods which yield a rich
variety of  {\bf snarks} (cyclically 4-edge-connected cubic graphs
of girth at least 5 which admit no 4-{\sc cnzf}) $G$ with $\phi_c(G)\geq
5$. Conjecture \ref{conj:mod-banlin}, as well as its restriction
to bridgeless cubic graphs, seems, at least for now, far beyond
our reach.

\smallskip

The vertices of any cubic graph can be partitioned into two color
classes with no monochromatic component on more than two vertices, by
repeatedly switching the color of a vertex which shares its color
with more than one of its three neighbors. The resulting partition is not always a bisection, as $|V_1|=|V_2|$ is not
guaranteed. However, the Petersen graph as well as $T_k$ for every
$k \geq 0$ admit such a partition where $|V_1|-|V_2|=2$. This
observation leads to the following:

\begin{conjecture}\label{conj:con3}
The vertex set of any cubic graph can be partitioned into two
color classes $V_1$ and $V_2$, satisfying $||V_1|-|V_2||\leq 2$, such that
each monochromatic component contains at most two vertices.
\end{conjecture}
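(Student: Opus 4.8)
The plan rests on the reformulation that a $2$-colouring of a cubic graph $G$ has all monochromatic components of size at most $2$ if and only if every vertex has at most one neighbour of its own colour --- equivalently, the monochromatic edges form a matching $M$, equivalently $G/M$ is bipartite. Such colourings always exist, as recalled just before the statement: repeatedly recolour a vertex with at least two same-coloured neighbours, which strictly decreases the number of monochromatic edges. The entire difficulty is therefore to control the imbalance $||V_1|-|V_2||$ over this set of colourings. Note that the case $\phi_c(G)<5$ is already handled by Proposition~\ref{local}, which in that range even yields a $4$-weak bisection and hence imbalance $0$; so one may assume throughout that $\phi_c(G)\ge 5$.

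The first step I would take narrows the goal by a parity count. For such a colouring let $s_i$ be the number of monochromatic components of $V_i$ that are single vertices and $p_i$ the number that are single edges, so $|V_i|=s_i+2p_i$. Counting the cut $E(V_1)$ from the $V_1$-side and from the $V_2$-side gives $3s_1+4p_1=3s_2+4p_2$, whence $4\mid (s_1-s_2)$; writing $s_1-s_2=4t$ yields $p_2-p_1=3t$ and $|V_1|-|V_2|=(s_1-s_2)+2(p_1-p_2)=-2t$. Thus $||V_1|-|V_2||$ is automatically even, and Conjecture~\ref{conj:con3} is \emph{equivalent} to the existence of a colouring of the above type with $|s_1-s_2|\le 4$.

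Next I would run an extremal argument. Among all $2$-colourings of $G$ in which every vertex has at most one same-coloured neighbour, choose one minimising $||V_1|-|V_2||$, and suppose for contradiction that this minimum is at least $4$; say $V_1$ is the larger class, so $t\le -2$, i.e.\ $V_2$ carries at least eight more singleton components and $V_1$ at least six more edge components than the other class. The aim is a bounded-size \emph{rebalancing move}: a set $S\subseteq V$ whose recolouring preserves the property ``every vertex has at most one same-coloured neighbour'' while strictly decreasing the imbalance, contradicting minimality. The abundance of singleton components in $V_2$ is the resource --- a singleton $v\in V_2$ has all three neighbours in $V_1$, each of which has a further neighbour in $V_1$ --- and one would reroute colours in a small ball around $v$ along alternating paths with respect to $M$, moving one unit of weight from $V_1$ to $V_2$. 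A variant I find appealing, modelled on the network-flow proof of Theorem~\ref{rb}, is to instead search for a single set $X\subseteq V$ with $|V_1\cap X|-|V_2\cap X|$ of the right sign and size so that recolouring all of $X$ at once restores the property with smaller imbalance, reducing the task to a max-flow/min-cut computation. An alternative, parallel to the proof of the paper's main theorem, is induction on $|V(G)|$ using the standard cubic-graph reductions (contracting a triangle, suppressing a $2$-edge-cut or a bridge, splitting at a $3$-edge-cut) while carrying the invariant $||V_1|-|V_2||\le 2$; there the care needed is that each reduction can shift the imbalance by a small amount, so the reduced colouring must be chosen so that the shift is favourable, possibly followed by one cleanup recolouring.

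The main obstacle, common to all these versions, is that the graphs with $\phi_c(G)\ge 5$ --- the Petersen graph, the snarks constructed in \cite{emt}, and the bridge examples $T_k$ of Proposition~\ref{p:counter} --- are exactly those admitting no \emph{perfectly} balanced colouring of this type, and forcing the imbalance of every one of them down to exactly $2$ is the whole content of the conjecture. For these graphs even the weaker Conjecture~\ref{conj:mod-banlin} is open, so I expect the rebalancing move (equivalently, the controlled reduction) to exist but to require a structural handle on graphs of large circular flow number that goes beyond the elementary counting above; producing that handle is the step I cannot currently carry out.
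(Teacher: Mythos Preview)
The statement you are attempting is Conjecture~\ref{conj:con3}, which the paper leaves \emph{open}; there is no proof in the paper to compare against. Your proposal is correspondingly not a proof either, and you say as much in the final paragraph: the extremal/rebalancing argument is only a programme, and you acknowledge that the decisive step---a structural handle on graphs with $\phi_c(G)\ge 5$ guaranteeing a bounded rebalancing move---is precisely what you cannot produce. The parity count in your second paragraph is correct and worth recording (it shows that the imbalance of any $2$-colouring with all monochromatic components of size at most $2$ is automatically even, so the conjecture asks exactly for imbalance in $\{0,2\}$), but everything after that is speculation rather than argument, and the appeals to max-flow/min-cut or to recolouring a set $X$ are not backed by any lemma that would force such an $X$ to exist.

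One minor correction: you describe ``an alternative, parallel to the proof of the paper's main theorem, [via] induction on $|V(G)|$ using the standard cubic-graph reductions''. The proof of Theorem~\ref{th:main} is not by induction or reduction; it builds a suitable spanning subgraph (a \emph{valid factor}) directly and $2$-colours alternately along its components. That technique might in fact be a more natural starting point for Conjecture~\ref{conj:con3} than the rebalancing heuristics you sketch, since it already yields imbalance $0$ with monochromatic components of size at most~$3$; the question would then be whether the size-$3$ components can be traded away against a controlled increase of the imbalance to at most~$2$.
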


\medskip

Unlike nowhere-zero flows and $r$-strong bisections, the existence
of $k$-weak bisections is not restricted to bridgeless graphs.
Hence, the existence of a $6$-weak bisection in every cubic graph
cannot be directly deduced from Seymour's 6-flow Theorem
\cite{seym6}. A stronger result however holds, as we  prove in the
following section.

\section{5-weak bisections}\label{s:5}
As noted in the previous section, a $k$-weak bisection can be
considered as a relaxed local variant of  $k$-{\sc nzf}, at least
when referring to $2$-connected cubic graphs.  When comparing our
results to what is known about strong bisections ($k$-{\sc
nzf}'s), we get the flavor of "One step down": Tutte's $5$-flow
conjecture asserts the existence of a $5$-strong bisection for
every $2$-connected cubic graph. In
Conjecture~\ref{conj:mod-banlin} (with the exception of the Petersen graph), $5$-strong bisections are replaced by $4$-weak
bisections. With the following theorem, not only we replace
"$6$-strong bisections" in Seymour's $6$-flow theorem (when
stated, without any loss of generality, for cubic graphs), by
"$5$-weak bisections", but we also manage to do so for all cubic
graphs, not necessarily bridgeless. Theorem \ref{th:main} can be
viewed as a \textbf{local} $5$-flow theorem (although its
restriction to bridgeless graphs happens to be the easier case).

\begin{theorem}\label{th:main}
Every cubic graph admits a 5-weak bisection.
\end{theorem}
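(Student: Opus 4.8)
The plan is to split the proof into two parts according to whether the cubic graph $G$ is bridgeless or not, and then, in the bridgeless case, to exploit Seymour's 6-flow theorem together with Proposition~\ref{local}, while in the general case to proceed by induction on the number of bridges (or on the number of vertices), using the block structure of $G$ to reduce to bridgeless pieces that are handled as in the first case.

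For the \textbf{bridgeless case}, a bridgeless cubic graph has a nowhere-zero $6$-flow by Seymour's theorem, hence a $6$-{\sc cnzf}, and so $\phi_c(G)\le 6$. If $\phi_c(G)<6$ then by Proposition~\ref{local} (with $\lfloor r\rfloor\le 5$) we immediately get a $5$-weak bisection. The delicate subcase is when $\phi_c(G)=6$ exactly, since then $\lfloor r\rfloor$ could be $6$. Here I would argue that Seymour's $6$-flow is in fact built from two $3$-flows (or a $2$-flow and a $3$-flow) on a cover of $G$ by two even subgraphs, and use the finer structure of the flow --- or observe that for a cubic graph $\phi_c(G)=6$ would force (via Theorem~\ref{rb}) a configuration that one can perturb --- to extract a $5$-weak (not necessarily $5$-strong) bisection directly; the weakness of the target (components need only be trees on at most $3$ vertices, with no balance-type inequality over \emph{all} subsets $X$) gives substantial room here.

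For the \textbf{general case with bridges}, I would induct on the number of vertices of $G$. Take a bridge $e=xy$ of $G$; deleting $e$ splits $G$ into two components $G_x\ni x$ and $G_y\ni y$, each of which has exactly one vertex of degree $2$. Suppress that vertex, or better, attach a small gadget to each of $G_x$ and $G_y$ to restore cubicity (for instance glue on a triangle with a pendant edge, or identify two copies), obtaining smaller cubic graphs to which induction applies. The key technical point is a \emph{relative} or \emph{rooted} version of the statement: one needs, for a cubic graph with a single marked half-edge (pendant vertex of degree $2$), a colouring with no large monochromatic component in which the colour classes are either balanced or off by exactly one, \emph{and} whose behaviour at the marked vertex can be controlled (the colour of the degree-$2$ vertex, and whether it starts a short monochromatic path). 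One would prove such a rooted lemma by the same induction, tracking a small finite set of "boundary types" at the root, and then combine the two sides across the bridge $e$: if the two pieces have colour-class discrepancies that cancel, the union is a genuine bisection; if not, one recolours along a path reaching $e$ to fix the parity, and one checks that no monochromatic component of more than $3$ vertices is created near $e$ --- here the fact that $e$ is a bridge means a monochromatic component straddling $e$ meets it in only the single edge $xy$, which keeps the bookkeeping finite.

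The \textbf{main obstacle} I expect is precisely this bridge case: managing the interaction between the size-balance condition $|V_1|=|V_2|$ (a global constraint) and the purely local no-large-monochromatic-component condition, across a potentially deep tree of bridges. Getting the right rooted invariant --- strong enough to be combinable at a bridge, weak enough to be maintained inductively --- is the heart of the argument; the bridgeless case, by contrast, is essentially a corollary of known flow theory modulo the $\phi_c=6$ borderline. I would also keep in mind the family $T_k$ from Proposition~\ref{p:counter}: since those graphs have \emph{no} $4$-weak bisection, the recolouring step must genuinely use the extra room afforded by allowing monochromatic paths on $3$ vertices, and the rooted lemma must be stated so that $T_k$-like obstructions are absorbed rather than propagated.
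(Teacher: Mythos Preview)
Your plan has a genuine gap in both halves, and the paper's approach is entirely different.

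\smallskip

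\textbf{Bridgeless case.} Using Seymour's $6$-flow theorem plus Proposition~\ref{local} does not close the argument: you only get a $\lfloor r\rfloor$-weak bisection from an actual $r$-{\sc cnzf}, and nothing you cite rules out $\phi_c(G)=6$ for a bridgeless cubic graph (that would be strictly stronger than Seymour's theorem). Your ``finer structure of the flow'' remark is not an argument. In fact the bridgeless case is much simpler than you suggest, and does not need flow theory at all: by Petersen's theorem a bridgeless cubic graph has a perfect matching $M$, hence a $2$-factor $T=E\setminus M$; removing two consecutive edges from each odd cycle of $T$ and $2$-colouring the resulting union of even paths and even cycles (taking care which matching gets the end-edges of each path) yields a $5$-weak bisection directly. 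The paper records exactly this short proof for graphs with a perfect matching.

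\smallskip

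\textbf{Bridge case.} This is, as you correctly guess, the real content of the theorem --- but your inductive scheme is only a wish. You never state the rooted invariant, and the $T_k$ family shows that the naive candidates fail: in each block of $T_k$ any admissible $2$-colouring is forced to be balanced with the degree-$2$ vertex of a prescribed colour, so there is no slack at the root to repair the global balance across the bridge. A workable rooted lemma would have to allow monochromatic $P_3$'s to be \emph{created on demand} near the root, and you give no mechanism for this.

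\smallskip

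\textbf{What the paper actually does.} The proof is uniform and does not split on bridges at all. One first shows (via Vizing) that every cubic graph has a \emph{valid factor}: a spanning subgraph whose components are paths and cycles, where every odd path has at least two vertices and non-adjacent endpoints. One then takes a valid factor $\mathcal{P}^*$ minimising first the number of components, and subject to that the number of odd cycles. A sequence of exchange arguments establishes strong structure on $\mathcal{P}^*$: no edge of $G$ joins ``external'' vertices (path-endpoints or odd-cycle vertices) of distinct components; odd cycles are chordless; each vertex of an odd cycle sends its third edge to a distinct odd path; and no path is adjacent to two odd cycles. In particular there are at least three times as many odd paths as odd cycles. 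Now colour each component alternately $1,2$. On each odd cycle one pair of adjacent vertices is forced monochromatic; use the ``first'' vertex of that pair to fix the colouring of its associated odd path so that the joining edge is bichromatic. The surplus of odd paths (at least $3n$ against $n$ odd cycles) leaves enough free parity choices to make $|V_1|=|V_2|$. The structural claims then guarantee that every possible ``bad'' vertex (one sharing its colour with two neighbours) is external, and no two bad vertices are adjacent, so no monochromatic $P_4$ or $K_3$ occurs. The entire argument is a single extremal/exchange proof on the valid factor, with no induction on bridges and no appeal to flow theorems.
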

 \medskip

Our proof of Theorem \ref{th:main} goes through a sequence of
preparatory technical claims.

\begin{definition}

A \textbf{valid factor} $\Pa$ of a graph $G$ is a spanning
subgraph of $G$, where every connected component is either a
cycle, or a path, with the property that an odd (number of
vertices) path of $\Pa$ is not an isolated vertex and its two
endpoints are non-adjacent in $G$.

\end{definition}

We first observe that a valid factor indeed exists in every cubic
graph $G$: By Vizing's theorem, $G$ has a (proper)
4-edge-coloring. Select all the edges of $G$ colored $1$ or $2$.
The subgraph obtained is a spanning subgraph whose connected
components are paths and (even) cycles, and none of them is an
isolated vertex. Now insert every edge of $G$ which connects the
two endpoints of an odd path of the factor, to turn it into a
cycle. The spanning subgraph obtained that way is a valid factor
of $G$.

Let $\Po$ be a valid factor of $G$ such that the number of
components of $\Po$ is minimal, and, subject to that condition,
the number of odd cycles of $\Po$ is minimal.

\begin{claim}\label{cl:edgeendpoint}
We say that a vertex of $G$ is \textbf{external}, if it is the
endpoint of a path of $\Po$, or it belongs to an odd cycle of
$\Po$.

There is no edge in $G$ which connects two external vertices on
two distinct components of $\Po$.
\end{claim}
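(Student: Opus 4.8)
The plan is to argue by contradiction from the minimality of $\Po$ (first in the number of components, then in the number of odd cycles). Suppose some edge $e=xy$ of $G$ joins an external vertex $x$ lying on a component $C_1$ of $\Po$ to an external vertex $y$ lying on a different component $C_2$. Since $x$ is external, $C_1$ is either a path with $x$ as an endpoint or an \emph{odd} cycle through $x$ --- it cannot be an even cycle, as no vertex of an even cycle is external --- and the same holds for $C_2$. Note $e\notin\Po$, since $e$ joins two distinct components.

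The construction I have in mind is a single local surgery. Add the edge $e$ to $\Po$; and whenever $x$ (respectively $y$) lies on an odd cycle, also delete one of the two cycle-edges incident to it (either one will do). Deleting these edges is exactly what keeps every vertex of degree at most $2$, so the outcome is still a disjoint union of paths and cycles spanning $V(G)$. One checks readily that the effect is to replace the two components $C_1$ and $C_2$ by a single path $Q$ on $|C_1|+|C_2|$ vertices, leaving every other component of $\Po$ untouched: in the two-path case $e$ simply concatenates the paths; in the mixed case $e$ attaches the path $C_1$ to the odd path obtained from $C_2$; and in the two-odd-cycle case both deletions first turn the cycles into odd paths, which $e$ then splices together. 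In all cases the resulting spanning subgraph has precisely one fewer component than $\Po$.

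It remains to turn this subgraph into a \emph{valid} factor, since any valid factor with fewer components than $\Po$ contradicts its choice. This hinges only on the parity of $|Q|=|C_1|+|C_2|$. If $|Q|$ is even, then $Q$ is an even path and the subgraph is already a valid factor. If $|Q|$ is odd, let $s,t$ be the two endpoints of $Q$ (distinct, as $Q$ has at least two vertices). If $s$ and $t$ are non-adjacent in $G$, then $Q$ is a valid odd path and we are done; if $st\in E(G)$, add also the edge $st$, converting $Q$ into an odd cycle $Z$ on $V(C_1)\cup V(C_2)$, which is a legal component. Either way we obtain a valid factor with fewer components, a contradiction. Crucially, it is irrelevant whether this last move creates new odd cycles, because the number of components is minimized first.

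The only genuine subtlety is this last, odd-$|Q|$ subcase, where the validity constraint on odd paths forces the fallback of closing $Q$ into an odd cycle; I expect it to be the main obstacle, and it is a mild one. The rest is bookkeeping that I would verify along the way: that the surgery never produces a degree-$3$ vertex (this is precisely why one edge is deleted at each odd-cycle endpoint); that $Q$ is genuinely a single path with two distinct endpoints (valid factors contain no isolated vertices, so $|C_1|,|C_2|\ge 2$, and $C_1,C_2$ are disjoint); and that the closing edge $st$ is not already present in the current subgraph (it is not, being the missing chord joining the two ends of the path $Q$).
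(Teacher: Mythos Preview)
Your argument is correct and follows essentially the same route as the paper: add the edge, delete one cycle-edge at each endpoint lying on an odd cycle to obtain a single path replacing the two components, and if that path is odd with adjacent endpoints, close it into an odd cycle. Your write-up is somewhat more explicit about the bookkeeping (degrees, non-isolated vertices, the closing edge not already present), but the idea is identical.
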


\begin{proof}
Let $v_1v_2$ be an edge of $G$, where $v_1$ and $v_2$ are external and
belong, respectively, to $T_1$ and $T_2$, two distinct components of
$\Po$. The number of components of $\Po$ is decreased by one when the
edge $v_1v_2$ is inserted to merge $T_1$ and $T_2$ into a single
component $T$. If $T_1$ and $T_2$ are both paths, then so is $T$ (see
Figure~\ref{fig:ext}, left). Otherwise, if $T_i$, for $i=1$, or $i=2$,
or both, is an odd cycle, then remove an edge $v_ix_i$, incident with
$v_i$ in $T_i$, to turn $T$ into a path (see Figure~\ref{fig:ext},
center and right). Finally, if the new path is odd and the two
endpoints are adjacent, insert the edge which connects its endpoints
to turn it into an odd cycle. Either way, we obtain a valid factor
$\Qo$ with one less component than $\Po$, which contradicts the
minimality of $\Po$.
\end{proof}

\begin{figure}[htb]
\centering \includegraphics[width=12cm]{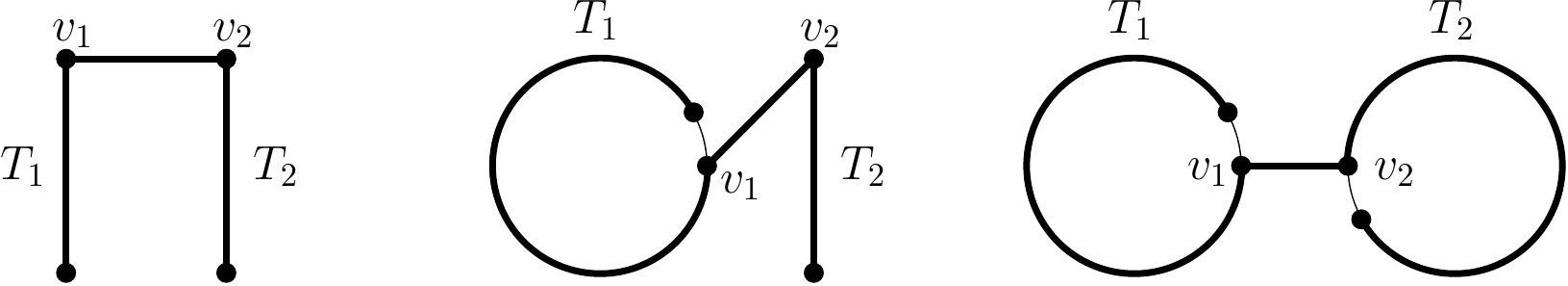} \caption{Three
instances of Claim~\ref{cl:edgeendpoint}} \label{fig:ext}
\end{figure}

\begin{claim}\label{cl:chord}
Each odd cycle of $\Po$ is chordless.
\end{claim}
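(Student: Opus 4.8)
The plan is to contradict the minimality of $\Po$ --- first in its number of components, then in its number of odd cycles --- in the same spirit as the proof of Claim~\ref{cl:edgeendpoint}. Suppose some odd cycle $C$ of $\Po$ has a chord $e=xy$, and let $A$ and $B$ be the two $x$--$y$ arcs of $C$, with vertex sequences $x=a_0,a_1,\dots,a_s=y$ and $x=b_0,b_1,\dots,b_t=y$ respectively, so that $s+t=|C|$ is odd. Relabelling the arcs if necessary, we may assume $s$ is even (hence $t$ is odd); since $G$ is simple, $s,t\ge 2$, so $t\ge 3$ and $|C|\ge 5$.

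The key remark is that if $R$ is any spanning path of the subgraph of $G$ on $V(C)$ all of whose edges are edges or chords of $C$, then replacing the component $C$ of $\Po$ by $R$ yields a spanning subgraph $\Qo$ with exactly the same components as $\Po$ except that $C$ has become the path $R$; hence $\Qo$ has the same number of components as $\Po$ and one fewer odd cycle. Since $R$ has an odd number of vertices ($|C|\ge 5$, so $R$ is not an isolated vertex), $\Qo$ is a valid factor as soon as the two endpoints of $R$ are non-adjacent in $G$, and then $\Qo$ contradicts the choice of $\Po$. So the whole proof reduces to producing such a path $R$.

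I would do this in three cases. If $a_1$ and $b_{t-1}$ are non-adjacent in $G$, take $R=a_1a_2\cdots a_s\,x\,b_1b_2\cdots b_{t-1}$ --- that is, arc $A$ with the edge $xa_1$ deleted, then the chord $e$, then arc $B$ with the edge $b_{t-1}y$ deleted --- a spanning path of $V(C)$ with endpoints $a_1$ and $b_{t-1}$. If $a_1b_{t-1}$ is an edge of $G$ but $a_{s-1}b_1$ is not, the analogous construction with the two arcs interchanged gives a spanning path with endpoints $a_{s-1}$ and $b_1$, again non-adjacent. The hard case, where I expect the real work to lie, is when both $a_1b_{t-1}$ and $a_{s-1}b_1$ are edges of $G$. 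Here I would first rule out $s=2$ (it would force $b_1=b_{t-1}$, hence $t=2$ and $|C|=4$, contradicting $|C|$ odd), so $s\ge 4$; then I would use \emph{both} of these chords together with $e$ to form $R=a_2a_3\cdots a_{s-1}\,b_1b_2\cdots b_{t-1}\,a_1\,x\,y$ (arc $A$ from $a_2$ to $a_{s-1}$, the chord $a_{s-1}b_1$, arc $B$ from $b_1$ to $b_{t-1}$, the chord $b_{t-1}a_1$, the edge $a_1x$, and finally the chord $e$), a spanning path of $V(C)$ with endpoints $a_2$ and $y$. Since the three neighbours of $y$ are $a_{s-1}$, $b_{t-1}$ and $x$, and $s\ge 4$ forces $a_2$ to differ from each of them, the endpoints of $R$ are non-adjacent. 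In every case we obtain a valid factor contradicting the minimality of $\Po$, so $C$ is chordless.
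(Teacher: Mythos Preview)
Your proof is correct, but it takes a noticeably different route from the paper's. The paper exploits a single parity observation: since $C$ is odd and $G$ is cubic, the non-cycle edges at vertices of $C$ cannot all be chords (chords would pair up the vertices of $C$), so some vertex $w$ on $C$ lies on no chord. One can then choose a chord $uv$ with $w$ adjacent to $u$ on $C$; deleting $uw$ and the appropriate cycle edge $vx$ and inserting the chord $uv$ produces a spanning path of $V(C)$ whose endpoints are $w$ and $x$. Non-adjacency of the endpoints is immediate, because $w$ is on no chord at all. This yields the contradiction in one stroke, with no case analysis.

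Your argument, by contrast, fixes an arbitrary chord and then splits into three cases according to which of the ``opposite'' pairs $\{a_1,b_{t-1}\}$, $\{a_{s-1},b_1\}$ are adjacent, using up to three chords simultaneously in the hardest case to route the spanning path. This works and is self-contained, but it is longer and hides the structural reason the claim holds. The trade-off: the paper's proof depends crucially on the oddness of $C$ to guarantee a chord-free vertex, while your Cases~1 and~2 would go through for any cycle with a chord, and only Case~3 uses oddness (to rule out $s=2$). So your approach is in a sense more local, but the paper's parity trick is the cleaner and more illuminating one here.
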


\begin{proof}
Assume that an odd cycle $C$ of $\Po$ admits a chord. Since $C$ is odd
and $G$ is cubic, not every vertex on $C$ belongs to a chord.  We can
then select a chord $uv$ of $C$ and a vertex $w$, adjacent to $u$ on
$C$, such that $w$ does not belong to a chord of $C$.  Let $x$ be the
neighbor of $v$ on $C$ lying on the portion of $C$ between $v$ and $u$
which avoids $w$. We now replace $C$ in $\Po$ by the path $(C\cup uv)
\setminus \{xv,uw\}$ (see Figure~\ref{fig:chord}, left), As $w$
belongs to no chord of $C$, the endpoints $w$ and $x$ of this new (odd) path
are non-adjacent, so the obtained factor $\Qo$ is indeed valid,
contains the same number of components as $\Po$, and has less odd
cycles than $\Po$. This contradicts the minimality of $\Po$.
\end{proof}

\begin{figure}[htb]
\centering
\includegraphics[width=8cm]{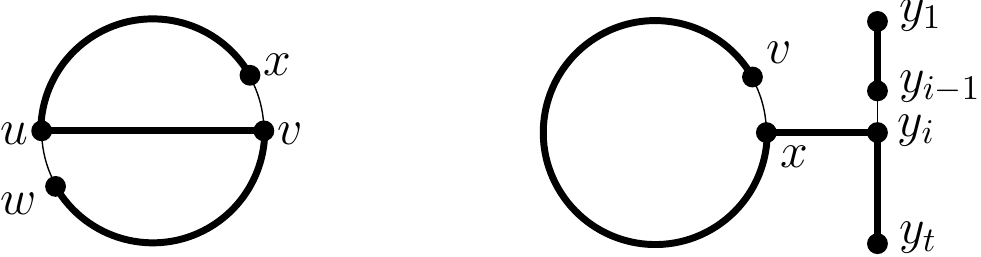}
\caption{Claim~\ref{cl:chord} (left) and Claim~\ref{cl:edgecyclepath2} (right).}\label{fig:chord}
\end{figure}

\begin{claim}\label{cl:edgecyclepath2}
Assume that $G$ contains an edge $xy_i$ such that $x$ is on an odd
cycle $C$ of $\Po$ and $y_i$ is a vertex of a path $P=y_1y_2\cdots
y_t$ of $\Po$. Then $i$ is even, $t$ is odd, and if $i\ne 2$ then
$y_1y_{i-1}$ is an edge of $G$ and if $i\ne t-1$ then $y_{i+1}y_t$
is an edge of $G$.
\end{claim}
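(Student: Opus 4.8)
The plan is to argue by contradiction with the minimality of $\Po$, using the same exchange technique as in Claims~\ref{cl:edgeendpoint} and~\ref{cl:chord}. Since the endpoints $y_1$ and $y_t$ of $P$ are external and $x$, lying on the odd cycle $C$, is external as well, Claim~\ref{cl:edgeendpoint} immediately forbids $y_i\in\{y_1,y_t\}$; hence $2\le i\le t-1$ (in particular $t\ge 3$), so the edges $y_{i-1}y_i$ and $y_iy_{i+1}$ of $P$ are both available.

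I would first open up the cycle: delete one of the two edges of $C$ incident with $x$ to obtain a path $Q$ on $|V(C)|$ (an odd number of) vertices, having $x$ as one endpoint and some $z\in V(C)$ as the other; note $z$ is external. \emph{Move 1:} delete $y_{i-1}y_i$, splitting $P$ into $R:=y_1\cdots y_{i-1}$ and $P':=y_i\cdots y_t$, and then insert the edge $xy_i$ to glue $Q$ and $P'$ into a single path $Q'$ with endpoints $z$ and $y_t$. As $z$ and $y_t$ are external vertices on distinct components of $\Po$, Claim~\ref{cl:edgeendpoint} shows they are non-adjacent, so $Q'$ is a legitimate component even when it is odd (and is never closed into a cycle). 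Only the components $C$ and $P$ of $\Po$ were touched, so the new spanning subgraph $\Qo$ has exactly the same number of components as $\Po$, while its number of odd cycles equals that of $\Po$, minus one for the destroyed cycle $C$, and plus one precisely when $R$ must be closed into an odd cycle. Thus, unless $\Qo$ fails to be a valid factor or its odd-cycle count fails to strictly decrease, we contradict the minimality of $\Po$. The only escapes are: $R$ is an isolated vertex, which forces $i=2$; or $R$ is an odd path with adjacent endpoints, which forces $i-1$ to be odd and $y_1y_{i-1}\in E(G)$. Either way $i$ is even, and $y_1y_{i-1}\in E(G)$ whenever $i\neq 2$.

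\emph{Move 2} is the mirror image: delete $y_iy_{i+1}$ from $P$, splitting it into $y_1\cdots y_i$ and $R':=y_{i+1}\cdots y_t$, and insert $xy_i$ to glue $Q$ onto $y_1\cdots y_i$, producing a path with endpoints $z$ and $y_1$, again non-adjacent by Claim~\ref{cl:edgeendpoint}. The same minimality argument forces $R'$ to be an isolated vertex, i.e. $i=t-1$, or an odd path with adjacent endpoints, i.e. $t-i$ odd and $y_{i+1}y_t\in E(G)$. Since $i$ is already known to be even, a parity check rules out $t$ even (then $t-i$ would be a positive even number, so neither case could occur), hence $t$ is odd; and then $y_{i+1}y_t\in E(G)$ whenever $i\neq t-1$. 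This yields all four assertions.

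I do not expect a genuine obstacle here: this is once more the routine ``break the odd cycle, reattach it along the given edge, compare numbers of components and of odd cycles'' argument. The only points requiring care are the parity bookkeeping and the two degenerate configurations $i=2$ and $i=t-1$ — which is exactly why the statement carries those two exceptions — together with the systematic appeal to Claim~\ref{cl:edgeendpoint} to guarantee that the newly built path is never forced to close up into a fresh odd cycle.
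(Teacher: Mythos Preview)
Your argument is correct and is essentially the paper's own proof: open the odd cycle at $x$, attach it to $P$ along $xy_i$, and examine the leftover subpath of $P$ to force the parity of $i$ and the adjacency $y_1y_{i-1}$ (resp.\ $y_{i+1}y_t$) via the minimality of $\Po$. The only cosmetic difference is organizational---you run the two symmetric moves first and deduce the parity of $t$ at the end, whereas the paper settles the parities of $t$ and $i$ first and then the adjacencies---but the construction and the contradiction are the same.
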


\begin{proof}
Let $v$ be a neighbor of $x$ on $C$. If $t$ is even, or if $t$ and $i$
are both odd, then at least one of the paths $y_1y_2\cdots y_{i-1}$
and $y_{i+1}y_{i+2}\cdots y_t$ contains an even number of
vertices. Assume then that $i-1$ is even (otherwise reverse the
labeling $1,2,...,t$ on $P$). Let $\Qo$ be obtained from $\Po$ by
replacing $C$ and $P$ by (1) the path which is the union of
$C\setminus vx$, the edge $xy_i$, and the path $y_{i+1}y_{i+2}\cdots
y_t$ and (2) the even path $y_1y_2\cdots y_{i-1}$ (see
Figure~\ref{fig:chord}, right). By Claim~\ref{cl:edgeendpoint}, $vy_t$
is not an edge of $G$ and therefore $\Qo$ is indeed a valid
factor. $\Qo$ contains the same number of components as $\Po$ but has
one odd cycle less, a contradiction.

The contradiction above implies that $t$ is odd and $i$ is even.
So we assume now that this is indeed the case, but $i\ne 2$ and
$y_1$ and $y_{i-1}$ are non-adjacent. We now construct $\Qo$
exactly as we did before. This time the path $y_1y_2\cdots
y_{i-1}$ is odd, however, its endpoints $y_1$ and $y_{i-1}$ are
non-adjacent, and $i\ne 2$ guarantees that it is not an isolated
vertex. Consequently, $\Qo$ is a valid factor also in this case
and the same contradiction still holds. The case where $i=t-1$ and
$y_t$ is non-adjacent to $y_{i+1}$ is handled identically, after
reversing the labeling along $P$.
\end{proof}

\begin{claim}\label{cl:2edgescyclepath}
Every two vertices on an odd cycle of $\Po$ are connected to two
distinct odd paths of $\Po$.
\end{claim}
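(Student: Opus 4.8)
The plan is to follow the pattern of Claims~\ref{cl:edgeendpoint}--\ref{cl:edgecyclepath2}: assuming the statement fails, I rebuild $\Po$ into a valid factor contradicting its minimality. I would first isolate a preliminary fact. Let $C$ be an odd cycle of $\Po$ and $x\in V(C)$; as $G$ is cubic and two of the three edges at $x$ lie on $C$, there is a unique third edge $xz\in E(G)$, and by Claim~\ref{cl:chord} we have $z\notin V(C)$. I claim that $z$ is an interior vertex of an odd path of $\Po$. Indeed $z$ lies in a component $T\ne C$, and by Claim~\ref{cl:edgeendpoint} $z$ is not an external vertex of $T$; so $T$ is an even cycle or a path in whose interior $z$ lies. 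The first case contradicts minimality: deleting an edge of $C$ at $x$ and an edge of $T$ at $z$ and inserting $xz$ — together, if the two resulting endpoints are adjacent in $G$, with the edge joining them — merges $C$ and $T$ into a single path, or a single odd cycle, on an odd number of vertices, a valid factor with one component fewer. Hence $z$ is interior to a path $P$ of $\Po$, which is odd by Claim~\ref{cl:edgecyclepath2}.

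Now suppose two distinct vertices $x_1,x_2$ of an odd cycle $C$ have their third edges ending on a common odd path $P=y_1y_2\cdots y_t$, say $x_1y_i,x_2y_j\in E(G)$. By Claim~\ref{cl:edgecyclepath2}, $i$ and $j$ are even and $t$ is odd; were $y_i$ an endpoint of $P$, the edge $x_1y_i$ would join two external vertices of distinct components, against Claim~\ref{cl:edgeendpoint}, so $y_i$, and likewise $y_j$, is interior, whence $2\le i,j\le t-1$; moreover $i\ne j$, because an interior vertex of $P$ meets a single edge outside $\Po$. Taking $i<j$ we get $i\le t-3$ and $j\ge 4$, so Claim~\ref{cl:edgecyclepath2} applied to $x_1y_i$ (using $i\ne t-1$) and to $x_2y_j$ (using $j\ne 2$) shows that $y_{i+1}y_t$ and $y_1y_{j-1}$ are edges of $G$.

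The heart of the argument is the following rerouting. Fix a neighbour $a$ of $x_1$ on $C$ and set $P_C:=C-x_1a$, a Hamilton path of $C$ with ends $x_1$ and $a$. Using the two edges just obtained, refold the vertex set of $P$ into the path $\Pi:\ y_i,y_{i-1},\dots,y_1,\ y_{j-1},y_{j-2},\dots,y_{i+1},\ y_t,y_{t-1},\dots,y_j$; its three blocks of indices partition $\{1,\dots,t\}$, so $\Pi$ is indeed a path on $V(P)$, with ends $y_i$ and $y_j$, whose only edges not lying on $P$ are $y_1y_{j-1}$ and $y_{i+1}y_t$. Splicing $P_C$, the edge $x_1y_i$ and $\Pi$ produces a single path $Q$ spanning $V(C)\cup V(P)$; since $|V(C)|$ and $t$ are both odd, $Q$ has an even number of vertices and hence is an even path, subject to no further condition in the definition of a valid factor. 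Replacing the components $C$ and $P$ of $\Po$ by $Q$ therefore yields a valid factor with one component fewer than $\Po$, the desired contradiction. Thus the third edges of two distinct vertices of an odd cycle reach two distinct (odd) paths.

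The main obstacle is exactly this choice of rerouting. Since only two edges of $G$ join $V(C)$ to $V(P)$, one cannot in general merge $C$ and $P$ into a single path using both of them while covering all of $V(C)$ — that would force $x_1,x_2$ to be adjacent on $C$ — and cruder attempts either strand the interior of an arc of $C$ or create a new odd cycle, neither of which decreases the pair (number of components, number of odd cycles) minimized by $\Po$. The resolution is to use only one connecting edge, $x_1y_i$, and to exploit the two edges furnished by Claim~\ref{cl:edgecyclepath2} to fold $P$ into the single path $\Pi$ from $y_i$ to $y_j$; the parity count ($\text{odd}+\text{odd}=\text{even}$) then makes $Q$ automatically admissible.
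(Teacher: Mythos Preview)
Your argument is correct and essentially identical to the paper's: the rerouted path you call $Q$ is precisely the paper's $(C\cup P\cup\{xy_i,y_1y_{j-1},y_{i+1}y_t\})\setminus\{xv,y_iy_{i+1},y_{j-1}y_j\}$, written out as a vertex sequence rather than an edge set. Your preliminary paragraph even fills in the even-cycle case that the paper glosses over when it invokes Claims~\ref{cl:edgeendpoint} and~\ref{cl:edgecyclepath2} to conclude the neighbouring component must be an odd path.
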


\begin{proof}
Let $C$ be an odd cycle of $\Po$. By Claim \ref{cl:chord}, $C$ is
chordless, so every vertex of $C$ is adjacent with one vertex on
another component of $\Po$. By Claims \ref{cl:edgeendpoint} and
\ref{cl:edgecyclepath2}, that other component is necessarily an
odd path. Assume now to the contrary, that $P=y_1y_2\cdots y_t$ is
a path of $\Po$ and that $G$ contains two edges $xy_i$ and $zy_j$,
with $i< j$. By Claim~\ref{cl:edgecyclepath2}, $i$ and $j$ are
even, and therefore $i\le j-2$. It follows that $j\ne 2$ and $i\ne
t-1$ and so by Claim~\ref{cl:edgecyclepath2}, $G$ contains the two
edges $y_1y_{j-1}$ and $y_{i+1}y_t$. Let $v$ be a neighbor of $x$
on $C$ (say $v\neq z$, although this is not really essential). Let
$\Qo$ be obtained from $\Po$ by replacing $C$ and $P$ by the path
$(C\cup
P\cup\{xy_i,y_1y_{j-1},y_{i+1}y_t\})\setminus\{xv,y_iy_{i+1},y_{j-1}y_j\}$
(see Figure~\ref{fig:cl45}, left). $C$ and $P$ are both odd, so
the new merged path is even and $\Qo$ is a valid factor with one
component less than $\Po$, a contradiction.
\end{proof}

\begin{figure}[htb]
\centering
\includegraphics[width=10cm]{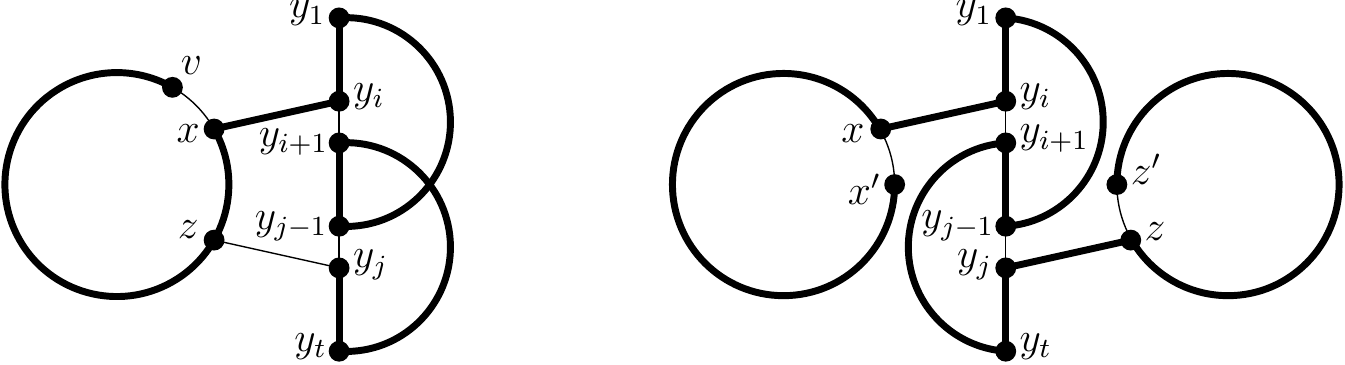}
\caption{Claim~\ref{cl:2edgescyclepath} (left) and
Claim~\ref{cl:cycle2path}
  (right).}\label{fig:cl45}
\end{figure}

\begin{claim}\label{cl:cycle2path}
A path of $\Po$ cannot be connected to two distinct odd cycles of
$\Po$.
\end{claim}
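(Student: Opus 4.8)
The plan is to argue by contradiction, in exactly the style of Claims~\ref{cl:edgeendpoint}--\ref{cl:2edgescyclepath}: assuming that some path $P=y_1y_2\cdots y_t$ of $\Po$ is joined by an edge to an odd cycle $C$ and by a second edge to a distinct odd cycle $C'$, I will produce a valid factor $\Qo$ with strictly fewer components than $\Po$, contradicting the minimality of $\Po$.

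First I fix notation. Pick an edge $xy_i$ of $G$ with $x\in C$ and an edge $x'y_j$ of $G$ with $x'\in C'$. Applying Claim~\ref{cl:edgecyclepath2} to each of them gives that $t$ is odd and that $i$ and $j$ are both even; in particular $y_i$ and $y_j$ are internal vertices of $P$, so $i\ne j$, and after possibly swapping the roles of $C$ and $C'$ I may assume $i<j$, hence $i\le j-2$. This forces $j\ge 4$ and $i\le t-3$ (so in fact $t\ge 5$), whence $j\ne 2$ and $i\ne t-1$; applying Claim~\ref{cl:edgecyclepath2} to $x'y_j$ (with $j\ne 2$) and to $xy_i$ (with $i\ne t-1$) then shows that both $y_1y_{j-1}$ and $y_{i+1}y_t$ are edges of $G$. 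Let $v$ be a neighbour of $x$ on $C$ and $v'$ a neighbour of $x'$ on $C'$.

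The surgery then replaces $C$, $P$ and $C'$ in $\Po$ by
\[\bigl(C\cup P\cup C'\cup\{xy_i,\;y_1y_{j-1},\;y_{i+1}y_t,\;x'y_j\}\bigr)\setminus\{xv,\;y_iy_{i+1},\;y_{j-1}y_j,\;x'v'\}.\]
Following the incidences (compare Figure~\ref{fig:cl45}, right) one checks that this subgraph is a single path, obtained by concatenating the path $C\setminus xv$ with $x$ as an endpoint, then $y_iy_{i-1}\cdots y_1$, then $y_{j-1}y_{j-2}\cdots y_{i+1}$, then $y_ty_{t-1}\cdots y_j$, then the path $C'\setminus x'v'$ with $x'$ as an endpoint; its two ends are $v$ and $v'$, which are distinct since $C$ and $C'$ are disjoint. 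Because $v$ and $v'$ are external vertices lying on the two distinct components $C$ and $C'$ of $\Po$, Claim~\ref{cl:edgeendpoint} yields $vv'\notin E(G)$, and the new path has at least three vertices, so it is a legal component of a valid factor. Hence the resulting spanning subgraph $\Qo$ is a valid factor, and it has two fewer components than $\Po$, the desired contradiction.

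The only remaining work is bookkeeping: verifying that the four inserted edges genuinely exist and were not already edges of $\Po$, that the four deleted edges genuinely lie in $\Po$, and that all indices stay in range (for instance $3\le i+1\le j-1\le t-2$) — all immediate from "$i,j$ even, $t$ odd, $i\le j-2$". I do not expect a genuine obstacle here: this is the mildest of the cycle-related claims, precisely because the two leftover ends $v,v'$ sit on \emph{different} components of $\Po$, so their non-adjacency is handed to us directly by Claim~\ref{cl:edgeendpoint}, without the extra care that was needed in Claim~\ref{cl:2edgescyclepath}.
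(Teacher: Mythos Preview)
Your argument is correct and matches the paper's own proof essentially line for line: the same application of Claim~\ref{cl:edgecyclepath2} to extract the chords $y_1y_{j-1}$ and $y_{i+1}y_t$, the same surgery merging $C$, $P$, $C'$ into a single path with endpoints on the two cycles, and the same appeal to Claim~\ref{cl:edgeendpoint} to certify non-adjacency of those endpoints. The only differences are cosmetic (you write $x',v,v'$ where the paper writes $z,x',z'$, and you spell out the concatenation order and the index bounds a bit more explicitly than the paper does).
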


\begin{proof}
Assume that $\Po$ contains a path $P=y_1y_2\cdots y_t$ and two odd
cycles $C$ and $C'$, such that $G$ contains two edges $xy_i$ and
$zy_j$ with $i<j$, $x\in C$, and $z\in C'$. By
Claim~\ref{cl:edgecyclepath2}, $i$ and $j$ are even, and therefore
$i\le j-2$. It follows that $j\ne 2$ and $i\ne t-1$, so by
Claim~\ref{cl:edgecyclepath2}, $G$ contains the two edges
$y_1y_{j-1}$ and $y_{i+1}y_t$. Let $x'$ be a neighbor of $x$ on
$C$ and $z'$ a neighbor of $z$ on $C'$. Let $\Qo$ be obtained from
$\Po$ by removing $P$, $C$ and $C'$ and inserting the path
$$(C\cup C'\cup
P\cup\{xy_i,y_1y_{j-1},y_{i+1}y_t,y_iz\})\setminus\{xx',y_iy_{i+1},y_{j-1}y_j,zz'\},$$
see Figure~\ref{fig:cl45} (right). By Claim \ref{cl:edgeendpoint}
the endpoints of the new path, $x'$ and $z'$ are non-adjacent and
$\Qo$ is indeed a valid factor with two components less than
$\Po$, a contradiction.
\end{proof}

\begin{claim}
\label{p3c} The number of odd paths of $\Po$ is at least three
times the number of odd cycles of $\Po$.
\end{claim}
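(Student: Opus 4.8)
The plan is a short double-counting argument between the odd cycles and the odd paths of $\Po$. Write $c$ for the number of odd cycles of $\Po$ and $p$ for the number of odd paths; if $c=0$ there is nothing to prove, so I may assume $c\ge 1$. Call an odd path $P$ of $\Po$ \emph{attached} to an odd cycle $C$ of $\Po$ if $G$ contains an edge joining a vertex of $C$ to a vertex of $P$. I will establish two facts: (i) every odd cycle of $\Po$ is attached to at least three odd paths of $\Po$; and (ii) no odd path of $\Po$ is attached to two distinct odd cycles. These immediately give the claim: by (ii), for distinct odd cycles $C$ and $C'$ the set of odd paths attached to $C$ is disjoint from the set of odd paths attached to $C'$, while by (i) each such set has size at least three, so the total number of odd paths is at least $3c$.

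Fact (ii) is precisely Claim~\ref{cl:cycle2path}. For Fact (i), I would fix an odd cycle $C$ of $\Po$ and argue as follows. By Claim~\ref{cl:chord} the cycle $C$ is chordless, so, $G$ being cubic, every vertex $x$ of $C$ has exactly one neighbour $w(x)$ outside $C$, lying on some other component $T_x$ of $\Po$. Since $x$ lies on an odd cycle it is external, so Claim~\ref{cl:edgeendpoint} forbids $w(x)$ from being external; in particular $T_x$ is not an odd cycle, and $w(x)$ is not an endpoint of a path of $\Po$. To see that $T_x$ is not an even cycle either, note that if it were, then deleting one cycle-edge at $x$, deleting one cycle-edge at $w(x)$, and inserting the edge $x\,w(x)$ replaces $C$ and $T_x$ by a single path $Q$ on the odd number $|C|+|T_x|$ of vertices; if the two endpoints of $Q$ are non-adjacent in $G$ then $Q$ is a valid component, and otherwise we close $Q$ into an odd cycle, which is always valid — either way we obtain a valid factor with one fewer component than $\Po$, contradicting the minimality of $\Po$. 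Hence $T_x$ is a path with $w(x)$ an internal vertex, and Claim~\ref{cl:edgecyclepath2} then forces $T_x$ to be an \emph{odd} path. Finally, by Claim~\ref{cl:2edgescyclepath} distinct vertices of $C$ are joined to distinct odd paths, so the odd paths $T_x$, with $x$ ranging over $C$, are pairwise distinct; since $|C|\ge 3$, this shows $C$ is attached to at least three odd paths, establishing (i).

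I do not expect a genuine obstacle here: all the structural work has already been carried out in Claims~\ref{cl:edgeendpoint}--\ref{cl:cycle2path}, so Claim~\ref{p3c} is essentially a bookkeeping consequence of them. The one point that calls for a sentence of care is excluding, inside the proof of Fact (i), the case in which $w(x)$ lies on an even cycle; this is dispatched by the same merge-and-reduce move that underlies the earlier claims, and it is in fact already used tacitly in the proof of Claim~\ref{cl:2edgescyclepath}, where the component containing the external neighbour of a vertex of an odd cycle is asserted to be an odd path.
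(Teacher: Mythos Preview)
Your argument is correct and follows the same line as the paper: both use Claims~\ref{cl:2edgescyclepath} and~\ref{cl:cycle2path} to set up an injection from the vertices on odd cycles of $\Po$ into the odd paths of $\Po$, and then observe that each odd cycle contributes at least three vertices. Your explicit exclusion of the even-cycle case for $T_x$ is a welcome bit of extra care---the paper asserts this inside the proof of Claim~\ref{cl:2edgescyclepath} by citing Claims~\ref{cl:edgeendpoint} and~\ref{cl:edgecyclepath2}, but neither of those claims literally covers it, so your merge-and-reduce justification is exactly what is needed.
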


\begin{proof}
Claims \ref{cl:2edgescyclepath} and \ref{cl:cycle2path} establish
an injective function from the set of all vertices on odd cycles
of $\Po$, into the set of odd paths of $\Po$. Since there are at
least three vertices on each cycle, the assertion of the claim
immediately follows.
\end{proof}

\medskip

We are now set to complete the proof of
Theorem~\ref{th:main}.

\medskip

\noindent  Let $G$ be a cubic graph and let $\Po$ be the valid
factor defined above. As every component of $\Po$ is either a path
or a cycle, we now assign the colors $1$ and $2$, alternately to
vertices along every component and define the set $V_i$, $i=1,2$
as the set of all vertices with the color $i$. This coloring
schema is not uniquely defined and leaves room for further
refinement. Let us start with the odd cycles. On every odd cycle
$C$ an alternating coloring leaves two adjacent vertices $x$ and
$z$ with the same color, and $xz$ is the only edge of $C$ with the
same color on its two endvertices. We define $x$ to be the
\textbf{first} vertex of $C$ and $z$ the last one. Now let $x$ be
the first vertex of an odd cycle $C$ and let $P$ be the odd path,
to which $x$ is connected, according to Claim
\ref{cl:2edgescyclepath}, by an edge $xy$. We now assign a color
to $y$ such that $x$ and $y$ have distinct colors and then proceed
from $y$ to alternately color all vertices along $P$ in both
directions. Note that by Claim \ref{cl:cycle2path}, any path of $\Po$
is connected to at most one odd cycle of $\Po$, so the coloring above
is well-defined.
 For $(V_1,V_2)$ to be a bisection we should keep the
numbers of vertices colored $1$ and colored $2$ equal. The
alternating coloring schema guarantees this equality on every even
component. Among the vertices of an odd component, be it an odd
cycle, or an odd path, the color of the first (and the last)
vertex gains a majority of one. So far we determined the coloring
of all odd cycles and one odd path against every odd cycle. Let
$n$ be the number of odd cycles. The advantage of one color over
the other, so far, is at most $2n$. By Claim \ref{p3c} there are
still at least that many odd paths not yet colored. Each of them
can change the difference between $|V_1|$ and $|V_2|$ by one, in
any direction, by selecting the color of its first vertex. That,
plus the fact that the total number of odd components is even,
makes it possible to complete the coloring with $|V_1|=|V_2|$, as
required. An example of coloring obtained is depicted in
Figure~\ref{fig:proof}.

\smallskip

\begin{figure}[htb]
\centering \includegraphics[width=10cm]{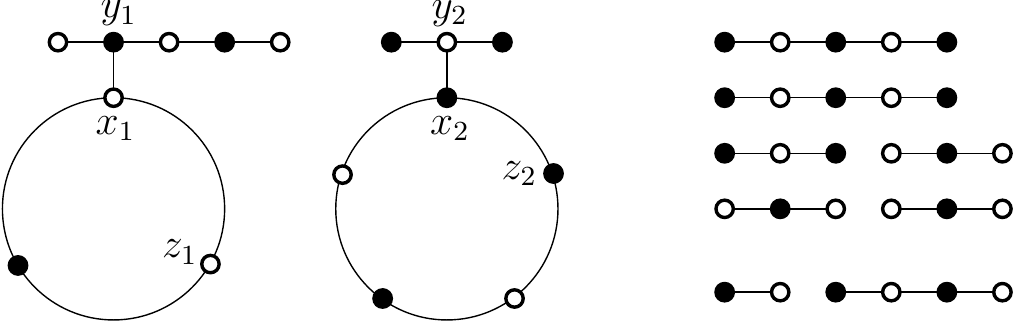} \caption{A typical
$2$-coloring of $\Po$.}\label{fig:proof}
\end{figure}

We now prove that there is no monochromatic component on more than
three vertices: Since no component of $\Po$ is an isolated vertex,
our alternating coloring schema provides, for every vertex, at
least one neighbor of the opposite color. That observation
eliminates monochromatic $K_{1,3}$. It remains to consider
monochromatic copies of $P_4$ (the path on $4$ vertices) and $K_3$
(the triangle). We say
that a vertex $x$ is \textbf{good}, if at least two of its three
neighbors differ from $x$ in color; $x$ is \textbf{bad} if it
shares his color with two of its neighbors. A monochromatic $P_4$ or $K_3$
contains two adjacent bad vertices of the same color. So, we only
need to show that no two such vertices are adjacent. Every inner
(not an endpoint) vertex of a path is necessarily good. The only
bad vertices on cycles may be the last vertices of the odd cycles.
Notice that the first vertex $x$ of an odd cycle $C$ differs in
color from one of its neighbors on $C$ (not the last one), and
from its neighbor $y$ on an odd path $P$, so $x$ is a good vertex.
It turns out then, that all bad vertices, if there are any, are
\textbf{external} (as defined in Claim \ref{cl:edgeendpoint}).
Therefore, by Claim \ref{cl:edgeendpoint}, no two bad vertices
from two distinct components of $\Po$ are adjacent. By the
definition of a valid factor, the two endpoints of an odd path are
non-adjacent; An odd cycle has at most one bad vertex (its last
vertex) and the two endpoints of an even path are of distinct
colors. In conclusion, there is no monochromatic $P_4$ or $K_3$, so $(V_1,V_2)$ is indeed a $5$-weak
bisection. \hfill $\square$

\medskip

As already mentioned, Theorem \ref{th:main} can be looked at, as a
(much) weaker version of the assertion of Tutte's 5-Flow
Conjecture. Not precisely weaker, because Theorem \ref{th:main}
holds for every cubic graph, while the 5-Flow Conjecture relates
only to bridgeless graphs. When searching potential directions to
deal with nowhere-zero flow problems, it is worth noting that we
can suggest a very short and simple proof of Theorem
\ref{th:main}, when restricted to cubic graphs which admits
perfect matchings, bridgeless graphs in particular. A similar
proof appears in~\cite{steff2}, although the result is not
explicitly stated there.

\vspace{10pt}

\begin{proof} Let $G=(V,E)$, be a cubic graph and $M$
 a perfect matching in $G$. Then $T=E \setminus M$ is a
\textbf{2-factor} (a union of disjoint simple cycles). Let $S$ be
a set of edges, which consists of two consecutive edges from each
odd cycle of $T$. Accordingly, $T'=T \setminus S$ is a union of
disjoint even cycles and even (odd number of edges) paths and, as
such, can be partitioned into two matchings $T_1$ and $T_2$. We
make sure to include in $T_1$ the two end-edges of each path  of
$T'$. That way, $S$ and $T_2$ have no common vertex, so every
connected component of $S \cup T_2$ is either a single edge of
$T_2$, or two edges of $S$. As the union of two matchings,
$P=M\cup T_1$ is bipartite. Let $(V_1,V_2)$ be any bipartition of
$P$. Since $M$ is a perfect matching, $(V_1,V_2)$ is a bisection.
An edge of $P$ connects two vertices of distinct colors, so all
monochromatic edges belong to $E \setminus P=S \cup T_2$.
Therefore there are at most two edges in a monochromatic component
and $(V_1,V_2)$ is indeed a $5$-weak bisection.
\end{proof}

Let us add that, following this proof, the number of monochromatic
paths on 3 vertices, can be bounded by the \textbf{oddness}
(minimum number of odd components in a 2-factor) of $G$.

\end{document}